\title
{Rounding of discrete variables}
\date{April 8, 2025}
\author{Svante Janson}
\thanks{Supported by the Knut and Alice Wallenberg Foundation
and
the Swedish Research Council
}
\address{Department of Mathematics, Uppsala University, PO Box 480,
SE-751~06 Uppsala, Sweden}
\email{svante.janson@math.uu.se}
\newcommand\urladdrx[1]{{\urladdr{\def~{{\tiny$\sim$}}#1}}}
\subjclass[2020]{} 
\numberwithin{equation}{section}
\renewcommand\le{\leqslant}
\renewcommand\ge{\geqslant}
\theoremstyle{plain}
\newtheorem{theorem}{Theorem}[section]
\theoremstyle{definition}
\newcommand\xqed[1]{%
    \leavevmode\unskip\penalty9999 \hbox{}\nobreak\hfill
    \quad\hbox{#1}}
\newtheorem{exampleqqq}[theorem]{Example}
\newenvironment{example}{\begin{exampleqqq}}
  {\xqed{$\triangle$}\end{exampleqqq}}
\newtheorem{remarkqqq}[theorem]{Remark}
\newenvironment{remark}{\begin{remarkqqq}}
  {\xqed{$\triangle$}\end{remarkqqq}}
\newtheorem*{ack}{Acknowledgement}
\theoremstyle{remark}
\newcounter{dummy}
\newcommand\myitem[1][]{\item[#1]\refstepcounter{dummy}\def\@currentlabel{#1}}
\newenvironment{romenumerate}[1][-10pt]{
\addtolength{\leftmargini}{#1}\begin{enumerate}
 }{\end{enumerate}}
\newcounter{oldenumi}
{\setcounter{oldenumi}{\value{enumi}}
\begin{romenumerate} \setcounter{enumi}{\value{oldenumi}}}
{\end{romenumerate}}
\newcounter{thmenumerate}
\newcounter{xenumerate}   
\newcommand{\refT}[1]{Theorem~\ref{#1}}
\newcommand{\refR}[1]{Remark~\ref{#1}}
\newcommand{\refS}[1]{Section~\ref{#1}}
\newcommand{\refSS}[1]{Section~\ref{#1}}
\xdef\klockan{\the\count1.0\the\count255}
\xdef\klockan{\the\count1.\the\count255}\fi
\newcommand{\sumin}{\sum_{i=1}^n}
\newcommand{\sumkn}{\sum_{k=1}^n}
\newcommand{\sumjq}{\sum_{j=0}^{q-1}}
\newcommand{\sumkq}{\sum_{k=0}^{q-1}}
\newcommand{\sumkoooo}{\sum_{k=-\infty}^{\infty}}
\newcommand{\sumjiq}{\sum_{j=1}^{q-1}}
\newcommand{\sumlq}{\sum_{\ell=0}^{q-1}}
\newcommand{\sumlid}{\sum_{\ell=1}^{d-1}}
\newcommand{\prodkn}{\prod_{k=1}^n}
\newcommand\set[1]{\ensuremath{\{#1\}}}
\newcommand\bigpar[1]{\bigl(#1\bigr)}
\newcommand\Bigpar[1]{\Bigl(#1\Bigr)}
\newcommand\bigsqpar[1]{\bigl[#1\bigr]}
\newcommand\sqpar[1]{[#1]}
\newcommand\cpar[1]{\{#1\}}
\newcommand\abs[1]{\lvert#1\rvert}
\newcommand\bigabs[1]{\bigl\lvert#1\bigr\rvert}
\newcommand\lrabs[1]{\left\lvert#1\right\rvert}
\def\rompar(#1){\textup(#1\textup)}    
\def\xexp(#1){e^{#1}}
\newcommand\ceil[1]{\lceil#1\rceil}
\newcommand\floor[1]{\lfloor#1\rfloor}
\newcommand\lrfloor[1]{\left\lfloor#1\right\rfloor}
\newcommand\round[1]{\langle#1\rangle}
\newcommand\punkt{\xperiod}    
\newcommand\eg{e.g\punkt}
\newcommand\ii{\mathrm{i}}
\newcommand\eqd{\overset{\mathrm{d}}{=}}
\newcommand\bbZ{\mathbb Z}
\newcounter{CC}
\newcounter{cc}
\newcommand\E{\operatorname{\mathbb E}{}} 
\renewcommand\P{\operatorname{\mathbb P{}}}
\newcommand\Var{\operatorname{Var}}
\newcommand\gf{\varphi}
\renewcommand\phi{\xxx}  
\newcommand\cE{\mathcal E}
\newcommand\tU{{\widetilde U}}
\newcommand\indic[1]{\boldsymbol1\cpar{#1}}
\newcommand\setoi{\set{0,1}}
\newcommand\ddx{\mathrm{d}}
\newcommand{\chf}{characteristic function}
\newcommand\rhs{right-hand side}
\newcommand\xX{{\floor{X}}}
\newcommand\yX{{\round{X}}}
\newcommand\hh{\widetilde{h}}
\newcommand\del{\mid}
\newcommand\GCD{\operatorname{GCD}}
\renewcommand\GCD{\gcd}
\newcommand{\Holder}{H\"older}
\newcommand\CS{Cauchy--Schwarz}
\newcommand\CSineq{\CS{} inequality}
\begin{document}

\begin{abstract} 
Let $X$ be a random variable that
takes its values in $\frac{1}q\bbZ$, for some integer $q\ge2$,
and consider $X$ rounded to an integer, 
either downwards or upwards or to the nearest integer.
We give general formulas for the characteristic function and moments
of the rounded variable. 
These formulas complement the related but different  formulas in the
case that $X$ has a continuous distribution, which was studied by Janson (2006).
\end{abstract}

\maketitle


\section{Introduction}\label{S:intro}

Let $X$ be a random variable and consider also $X$ rounded to an integer;
more precisely we may consider, for example, $\floor X$ (rounding downwards)
or $\round X$ (rounding to the nearest integer),  
see \refSS{SSnot} for precise definitions.

We gave in \cite{SJ175} general formulas for the characteristic function and
moments (in particular, the first and second moments)
of the rounded variable in the important case when $X$ has a
continuous distribution; this was motivated by several examples that had
appeared as subsequence limits of integer-valued random variables in
different problems.

In the present paper we consider instead the case of $X$ with a discrete
distribution.
More precisely, we suppose that there exists an integer $q\ge2$ such that
$X$ takes its values in $\frac{1}q\bbZ$, or equivalently that $qX\in\bbZ$ a.s.
(For completeness, we allow also $q=1$ below, but the results are trivial in
this case.)
Again, we give general formulas for the characteristic function and moments.

The results can be compared to those given in \cite{SJ175} for the case of a
continuous distribution; the results are similar but different.

\begin{remark}
  Our setting is obviously equivalent to  rounding an integer-valued
random variable to multiples of $q$, and the results can easily be
translated to that case.
\end{remark}

\begin{ack}
  I thank Seungki Kim for inspiring this work by a question, motivated by
  a possible application. It now seems that these result are not needed for 
  that purpose, but I nevertheless collect them here for other possible
  future applications.
\end{ack}

\section{Notation}\label{SSnot}

The characteristic function of a random variable $X$ is denoted by
$\gf_X(t):=\E e^{\ii t X}$.

The indicator function of an event $\cE$ is denoted $\indic{\cE}$.

For two integers $j$ and $k$, $j\del k$ means that $j$ is a divisor of $k$,
i.e., that $k\in j\bbZ$.

For a real number $x$, let $\floor{x}$ and $\ceil{x}$
denote $x$ rounded downwards and upwards, respectively, to the nearest integer.
Similarly,
$\round x$ denotes $x$ rounded to the nearest integer,
for definiteness choosing the larger one if there is a tie.
Thus 
$\floor{x}\le x<\floor{x}+1$,
$\ceil{x}-1< x\le\ceil{x}$
and
$\round{x}-\frac12\le x<\round{x}+\frac12$;
furthermore, 
\begin{align}
\label{ceil}
\ceil{x}&=-\floor{-x},
\\
\label{round}
\round{x}&=\lrfloor{x+\tfrac12}  
.\end{align}

\begin{remark}\label{Rsym}
  Rounding downwards to $\floor X$ is obviously an asymmetric operation;
the mirror operation is to rounding upwards to $\ceil{X}$.
Since $\ceil{X}=-\floor{-X}$ by \eqref{ceil}, 
we will for simplicity consider only $\floor{X}$ and $\round{X}$ below;
results for $\ceil{X}$ follow from the results 
$\floor{X}$ applied to $-X$. In particular, it is easily
seen that 
\eqref{er1} and \eqref{tmom1} below hold for $\ceil{X}$ if we replace
$h_q(t)$ by $h_q(-t)$.

Also rounding to $\round{X}$ is slightly asymmetric since we choose
the larger
value when there is a tie.
The alternative to choose the lower value is again the mirror operation
given by $-\round{-X}$, and results follow from the results below,
now replacing $\hh_q(t)$ by $\hh_q(-t)$.

Note that when $q$ is odd and $qX$ is integer-valued, there cannot be a tie
for the rounding $\round{X}$, so the two versions coincide.
This explains the greater symmetry in some of the results for odd $q$, in
particular that $h_q(t)$ in \eqref{c2o} then is a symmetric function.
\end{remark}

\section{The \chf}\label{Schf}

Let $q\ge1$ be a fixed integer.
Suppose that $X$ is a random variable such that 
$qX$ is integer-valued,
i.e., 
$X\in \frac{1}q\bbZ$ (a.s.).
Note that this assumption implies (and in fact is equivalent to)
$\gf_X$ havíng period $2\pi q$.
We consider the rounded variables $\xX$ and $\yX$.

We first define two auxilliary functions. Let
\begin{align}
  \label{c1}
h_q(t):=\frac{1}{q} \sumkq e^{-\ii t k/q} 
=\frac{1-e^{-\ii t}}{q(1-e^{-\ii t/q})}
=\frac{\sin \frac{t}2}{q\sin\frac{t}{2q}}e^{-\ii\frac{q-1}{2q}t},
\end{align}
where the two last formulas are interpreted by continuity as 1 when the
denominator is 0 (i.e., when $t\in 2\pi q\bbZ$). 
Furthermore, let, if $q$ is even,
\begin{align}
  \label{c2e}
\hh_q(t):=\frac{1}{q} \sum_{k=-q/2}^{q/2-1} e^{-\ii t k/q} 
=e^{\ii t/2}h_q(t)
=\frac{\sin \frac{t}2}{q\sin\frac{t}{2q}}e^{\frac{\ii t}{2q}},
\end{align}
and if $q$ is odd
\begin{align}
  \label{c2o}
\hh_q(t):=\frac{1}{q} \sum_{k=-(q-1)/2}^{(q-1)/2} e^{-\ii t k/q} 
=e^{\ii\frac{q-1}{2q} t}h_q(t)
=\frac{\sin \frac{t}2}{q\sin\frac{t}{2q}}.
\end{align}

Note that these functions may be interpreted as \chf{s}.
Let $U_q$ be a random variable that is uniformly distributed
on $\set{\frac{k}{q}: k=0,\dots,q-1}$, and let 
$\tU_q$ be a random variable that is uniformly distributed on
$\set{\frac{k}{q}: k=-\frac{q}2,\dots,\frac{q}2-1}$ ($q$ even)
or
$\set{\frac{k}{q}: k=-\frac{q-1}2,\dots,\frac{q-1}2}$ ($q$ odd).
Thus, these random variables are uniformly distributed 
on the set of the $q$ integer
multiples of $1/q$ in $[0,1)$ ($U_q$) and $[-\frac12,\frac12)$
($\tU_q$).
Then $h_q(-t)$ is the \chf{} $\gf_{U_q}(t)$ of $U_q$
and $\hh_q(-t)$ is the \chf{} $\gf_{\tU_q}(t)$ of $\tU_q$;
equivalently,
\begin{align}\label{gfU}
h_q(t)=\gf_{-U_q}(t),
\qquad
\hh_q(t)=\gf_{-\tU_q}(t).  
\end{align}
 
\begin{theorem}\label{T1}
Let $q\ge1$ be an integer, and
suppose that $X$ is a random variable such that 
$qX$ is integer-valued.
Then
\begin{align}
  \label{er1}
\gf_\xX(t) =\sumjq  h_q(t+2\pi j)\gf_X(t+2\pi j)
\end{align}
and
\begin{align}
  \label{er2}
\gf_\yX(t) =\sumjq  \hh_q(t+2\pi j)\gf_X(t+2\pi j).
\end{align}
\end{theorem}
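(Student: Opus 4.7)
The plan is to reduce both \eqref{er1} and \eqref{er2} to a finite Fourier (DFT) identity on $\bbZ/q\bbZ$. For \eqref{er1}, I would first set $W:=qX$, an integer-valued \rv{} by hypothesis, and let $K:=W\bmod q\in\set{0,\dots,q-1}$. Then $\xX=(W-K)/q=X-K/q$, so $e^{\ii t\xX}=e^{\ii tX}\,e^{-\ii tK/q}$. The heart of the matter is to expand the function $k\mapsto e^{-\ii tk/q}$ on $\set{0,\dots,q-1}$ in the character basis $k\mapsto e^{2\pi\ii jk/q}$, $j=0,\dots,q-1$. The DFT orthogonality relations identify the expansion coefficients directly from the defining sum \eqref{c1}, namely $h_q(t+2\pi j)$, yielding the pointwise identity $e^{-\ii tk/q}=\sumjq h_q(t+2\pi j)\,e^{2\pi\ii jk/q}$ for each $k\in\set{0,\dots,q-1}$. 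Specialising to $k=K$ and using the crucial observation $e^{2\pi\ii jK/q}=e^{2\pi\ii jX}$, which holds because $qX-K=q\xX\in q\bbZ$ makes the difference a multiple of $q$, I would rearrange to obtain $e^{\ii t\xX}=\sumjq h_q(t+2\pi j)\,e^{\ii(t+2\pi j)X}$; taking expectations then yields \eqref{er1}.

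For \eqref{er2} I would mirror the argument with $K':=qX-q\yX$ playing the role of $K$. Under the round-half-up convention adopted in \refS{SSnot}, $K'$ takes values in $\set{-q/2,\dots,q/2-1}$ when $q$ is even and in $\set{-(q-1)/2,\dots,(q-1)/2}$ when $q$ is odd, which are precisely the supports used in \eqref{c2e} and \eqref{c2o} to define $\hh_q$. The DFT orthogonality on this shifted complete residue system reads off $\hh_q(t+2\pi j)$ as the expansion coefficients; the identical substitution and the same identity $e^{2\pi\ii jK'/q}=e^{2\pi\ii jX}$ then produce \eqref{er2}.

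The only step requiring genuine care is the verification of the range of $K'$ in the even case (where ties can occur), so that the DFT is carried out over exactly the support on which $\hh_q$ is defined; the round-half-up tie-breaking rule in \refR{Rsym} is what makes this bookkeeping come out right, and is also why the two formulas \eqref{c2e}, \eqref{c2o} must be treated as separate cases. Beyond that, the coefficient identifications are essentially tautological given the definitions \eqref{c1}, \eqref{c2e}, \eqref{c2o}, and I do not anticipate a substantive obstacle.
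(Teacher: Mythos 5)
Your argument is correct, and it reaches \eqref{er1}--\eqref{er2} by a route that differs from the paper's in a meaningful way. The paper introduces an auxiliary uniform variable $U_q$ independent of $X$, observes that $\P(\floor{X}=k)=q\P(X-U_q=k)$, and then applies an extraction identity (Fourier inversion on $\bbZ_q$): for any $Y$ with values in $\tfrac1q\bbZ$, $\sumjq\gf_Y(t+2\pi j)=q\sum_k e^{\ii tk}\P(Y=k)$; the weights $h_q(t+2\pi j)$ then appear through the factorization $\gf_{X-U_q}=\gf_{-U_q}\gf_X=h_q\gf_X$. You instead avoid auxiliary randomness entirely: writing $K:=qX-q\xX\in\set{0,\dots,q-1}$ you get the pathwise (almost sure) identity $e^{\ii t\xX}=e^{\ii tX}e^{-\ii tK/q}$, expand $k\mapsto e^{-\ii tk/q}$ in the characters of $\bbZ/q\bbZ$ with coefficients $h_q(t+2\pi j)$, and use $e^{2\pi\ii jK/q}=e^{2\pi\ii jX}$ to conclude $e^{\ii t\xX}=\sumjq h_q(t+2\pi j)e^{\ii(t+2\pi j)X}$ before taking expectations. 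Both proofs ultimately rest on the same orthogonality relation $\sumjq e^{2\pi\ii j\ell/q}=q\indic{q\del\ell}$, but yours is the dual (synthesis rather than analysis) formulation and is arguably more elementary, since it needs no independence and in fact proves an identity of random variables, not merely of their expectations; the paper's convolution viewpoint has the advantage of making the appearance of $h_q=\gf_{-U_q}$ conceptually transparent and of feeding directly into the later interpretation \eqref{gfU}. Your handling of the rounded case is also right: with the round-half-up convention, $K':=qX-q\yX$ lies in $[-q/2,q/2)$, hence in exactly the supports used to define $\hh_q$ in \eqref{c2e} and \eqref{c2o}, and the shifted complete residue system is all the DFT step requires.
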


\begin{proof}
It is convenient to use the random variables $U_q$ and $\tU_q$ above,
assuming as we may that they are independent of $X$.
Then, for any integer $k$,
\begin{align}\label{er3}
  \P(\floor{X}=k)&
=\sumjq \P\Bigpar{X=k+\frac jq}
=q\sumjq \P\Bigpar{X=k+\frac jq}\P\Bigpar{U_q=\frac jq}
\notag\\&
=q\P\bigpar{X-U_q=k}.
\end{align}
Furthermore, if $Y$ is any random variable with values in $\frac1q\bbZ$, then
\begin{align}\label{er4}
  \sumjq \gf_Y(t+2\pi j)&
= \sumjq \sumkoooo \sumlq \P\bigpar{Y=k+\ell/q}e^{\ii(t+2\pi j)(k+\ell/q)}
\notag\\&
=\sumkoooo \sumlq e^{\ii t(k+\ell/q)}\P\bigpar{Y=k+\ell/q}\sumjq e^{2\pi\ii j\ell/q}
\notag\\&
=q\sumkoooo e^{\ii t k}\P\bigpar{Y=k}
\end{align}
since the inner sum over $j$ vanishes when $\ell\neq0$ and is $q$ for $\ell=0$.
(This argument can be regarded as Fourier inversion on the finite group
$\bbZ_q$). 
Taking $Y:=X-U_q$, we obtain by combining \eqref{er3} and \eqref{er4}
and recalling \eqref{gfU},
\begin{align}
\gf_{\floor{X}}(t)&
=q\sumkoooo e^{\ii tk}\P\bigpar{X-U_q=k}
=
\sumjq \gf_{X-U_q}(t+2\pi j)
\notag\\&
=
\sumjq \gf_{-U_q}(t+2\pi j) \gf_X(t+2\pi j)
=
\sumjq h_q(t+2\pi j) \gf_X(t+2\pi j),
\end{align}
which shows \eqref{er1}.

We obtain \eqref{er2} by the same argument, now using $\tU_q$ instead of $U_q$.
\end{proof}

\begin{remark}
  \label{Rq}
The sums in \eqref{er1} and \eqref{er2} may be taken over any set of $j$
that has exactly one element in each residue class mod $j$,
since the summands have period $q$ in $j$
(because $h_q$, $\hh_q$, and $\gf_X$ all are $2\pi q$-periodic).
The same applies to \eqref{tmom1} and \eqref{tmom2} below.
\end{remark}

\section{Moments}\label{Smom}
Suppose that for some integer $r\ge1$,
$X$ has a finite $r$th moment, i.e., $\E[|X|^r]<\infty$.
Then  obviously $\floor X$ and $\round X$ also have finite $r$th moments.
\refT{T1} implies the following formulas.

\begin{theorem}\label{Tmom}
Let $q\ge1$ be an integer, and
suppose that $X$ is a random variable such that 
$qX$ is integer-valued.
Then, for every integer $r\ge1$ such that $\E|X|^r<\infty$,
\begin{align}
  \label{tmom1}
\E\bigpar{\xX^r} 
=\ii^{-r}\sumjq \frac{\ddx^r}{\ddx t^r} \bigpar{h_q(t)\gf_X(t)}\bigr|_{t=2\pi j}
\end{align}
and
\begin{align}
  \label{tmom2}
\E\bigpar{\yX^r} 
=\ii^{-r}\sumjq \frac{\ddx^r}{\ddx t^r} \bigpar{\hh_q(t)\gf_X(t)}\bigr|_{t=2\pi j}
.\end{align}
\end{theorem}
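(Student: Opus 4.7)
The plan is to derive \eqref{tmom1} and \eqref{tmom2} by differentiating the characteristic-function identities of \refT{T1} a total of $r$ times in $t$ and invoking the standard identity $\E[Y^r]=\ii^{-r}\gf_Y^{(r)}(0)$, valid whenever $\E|Y|^r<\infty$.

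First I would check that the moment hypothesis transfers to the rounded variables: since $|\floor X|\le|X|+1$ and $|\round X|\le|X|+\tfrac12$, the assumption $\E|X|^r<\infty$ implies $\E|\floor X|^r<\infty$ and $\E|\round X|^r<\infty$. Consequently $\gf_X$, $\gf_{\floor X}$, and $\gf_{\round X}$ are all $r$ times continuously differentiable on $\bbR$. The auxiliary functions $h_q$ and $\hh_q$ in \eqref{c1}--\eqref{c2o} are trigonometric polynomials, hence entire, so the products on the right-hand sides of \eqref{er1} and \eqref{er2} are $r$ times continuously differentiable as well.

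Next I would differentiate \eqref{er1} term by term $r$ times in $t$; the sum over $j$ is finite, so this is unproblematic and gives
\begin{align*}
\gf_{\floor X}^{(r)}(t) = \sumjq \frac{\ddx^r}{\ddx t^r}\bigsqpar{h_q(t+2\pi j)\gf_X(t+2\pi j)}.
\end{align*}
Setting $t=0$ and applying the translation identity $\frac{\ddx^r}{\ddx t^r}[f(t+a)]\bigr|_{t=0}=f^{(r)}(a)$ in each summand yields
\begin{align*}
\gf_{\floor X}^{(r)}(0) = \sumjq \frac{\ddx^r}{\ddx t^r}\bigsqpar{h_q(t)\gf_X(t)}\Bigr|_{t=2\pi j}.
\end{align*}
Multiplying by $\ii^{-r}$ and using $\E[\floor X^r]=\ii^{-r}\gf_{\floor X}^{(r)}(0)$ produces \eqref{tmom1}. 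The proof of \eqref{tmom2} is identical after replacing $h_q$ by $\hh_q$ and $\floor X$ by $\round X$ throughout.

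There is essentially no technical obstacle: the whole argument reduces to term-by-term differentiation of a finite sum of smooth functions combined with the elementary fact that $r$-fold differentiability of a characteristic function at $0$ encodes the $r$th moment. The only step that would merit explicit mention is the moment transfer $\E|X|^r<\infty\Rightarrow\E|\floor X|^r,\E|\round X|^r<\infty$, and this is immediate from the bounds $|\floor X|,|\round X|\le|X|+1$.
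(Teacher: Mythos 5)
Your proof is correct and follows the same route as the paper: differentiate the identities \eqref{er1} and \eqref{er2} of \refT{T1} $r$ times at $t=0$ and use $\E[Y^r]=\ii^{-r}\gf_Y^{(r)}(0)$; the paper states this in one line and records the moment transfer $\E|X|^r<\infty\Rightarrow\E|\floor X|^r,\E|\round X|^r<\infty$ just before the theorem, exactly as you do. Your additional remarks on smoothness and term-by-term differentiation of the finite sum are harmless elaborations of the same argument.
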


\begin{proof}
These follow immediately by
 differentiating \eqref{er1} and \eqref{er2} $r$ times at $t=0$.
\end{proof}

In the following two sections, we derive more explicit formulas for the
first two moments.

\section{Mean}\label{Smean}

\begin{theorem}\label{TM}
Let $q\ge1$ be an integer, and
suppose that $X$ is a random variable such that 
$qX$ is integer-valued.
Suppose also $\E|X|<\infty$.
Then
\begin{align}\label{t2}
\E\xX&
=
\E X-\frac12+\frac{1}{2q}+\sumjiq \frac{1}{q(1-e^{-2\pi\ii j/q})}\gf_X(2\pi j).
\end{align}

If $q$ is even, then
\begin{align}\label{t2e}
\E\yX&
=
\E X+\frac{1}{2q}+\sumjiq \frac{(-1)^j}{q(1-e^{-2\pi\ii j/q})}\gf_X(2\pi j).
\end{align}
If $q$ is odd, then
\begin{align}\label{t2o}
\E\yX&
=
\E X+\sumjiq \frac{(-1)^j}{q(e^{\pi\ii j/q}-e^{-\pi\ii j/q})}\gf_X(2\pi j).
\end{align}
\end{theorem}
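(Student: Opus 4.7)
The plan is to apply \refT{Tmom} with $r=1$, so we must differentiate $h_q(t)\gf_X(t)$ and $\hh_q(t)\gf_X(t)$ once and evaluate at $t=2\pi j$ for $j=0,\dots,q-1$. The only non-routine calculation is finding $h_q(2\pi j)$ and $h_q'(2\pi j)$; once those are in hand, the three displays follow by the product rule and $\gf_X(0)=1$, $\gf_X'(0)=\ii\E X$.

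First I evaluate $h_q$ at the points $2\pi j$. From the sum in \eqref{c1},
$h_q(2\pi j) = \frac1q\sumkq e^{-2\pi \ii jk/q} = \indic{j=0}$ for $0\le j\le q-1$.
To handle the derivative, I use the functional identity $h_q(t)\bigpar{1-e^{-\ii t/q}} = \frac{1}{q}\bigpar{1-e^{-\ii t}}$ from \eqref{c1}. Differentiating and setting $t=2\pi j$ with $1\le j\le q-1$, the term containing $h_q(2\pi j)$ vanishes and I obtain
\begin{align*}
h_q'(2\pi j) = \frac{\ii}{q\bigpar{1-e^{-2\pi\ii j/q}}}, \qquad 1\le j\le q-1.
\end{align*}
A direct computation from \eqref{c1} also gives $h_q'(0)=-\ii(q-1)/(2q)$. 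Plugging these into \eqref{tmom1} with $r=1$ separates the $j=0$ term, which produces $\E X -\frac12+\frac{1}{2q}$, from the sum over $j\geq1$, yielding exactly \eqref{t2}.

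For $\round X$ I repeat the argument with $\hh_q$, using the relations $\hh_q(t)=e^{\ii t/2}h_q(t)$ when $q$ is even and $\hh_q(t)=e^{\ii(q-1)t/(2q)}h_q(t)$ when $q$ is odd, as given in \eqref{c2e} and \eqref{c2o}. In both cases $\hh_q(2\pi j)=\indic{j=0}$. For $j\geq1$, the product rule leaves only one surviving term, giving
\begin{align*}
\hh_q'(2\pi j) =
\begin{cases}
(-1)^j\,\dfrac{\ii}{q(1-e^{-2\pi\ii j/q})}, & q \text{ even},\\[4pt]
(-1)^j\,\dfrac{\ii\, e^{-\ii\pi j/q}}{q(1-e^{-2\pi\ii j/q})}, & q \text{ odd}.
\end{cases}
\end{align*}
In the odd case I rewrite $e^{-\ii\pi j/q}/(1-e^{-2\pi\ii j/q}) = 1/(e^{\ii\pi j/q}-e^{-\ii\pi j/q})$. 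At $t=0$, the two definitions give $\hh_q'(0)=\ii/(2q)$ in the even case and $\hh_q'(0)=0$ in the odd case. Substituting into \eqref{tmom2} with $r=1$ then yields \eqref{t2e} and \eqref{t2o} respectively.

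The only real obstacle is the evaluation of $h_q'(2\pi j)$ at the ``bad'' points where the denominator in the closed-form expression for $h_q$ has a zero; the trick is to differentiate the identity $h_q(t)(1-e^{-\ii t/q})=\frac1q(1-e^{-\ii t})$ rather than $h_q$ itself, which removes the apparent singularity. Everything else is bookkeeping.
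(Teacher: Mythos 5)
Your proposal is correct and follows essentially the same route as the paper: apply \eqref{tmom1}--\eqref{tmom2} with $r=1$, note $h_q(2\pi j)=\hh_q(2\pi j)=\indic{j=0}$, and compute the derivatives $h_q'(0)$, $h_q'(2\pi j)$, $\hh_q'(0)$, $\hh_q'(2\pi j)$ exactly as in \eqref{m2}--\eqref{m3} and \eqref{w2e}--\eqref{w3o}. Your device of differentiating the identity $h_q(t)(1-e^{-\ii t/q})=\frac1q(1-e^{-\ii t})$ to get $h_q'(2\pi j)$ is just a cleaner phrasing of the paper's remark that the numerator of the second expression in \eqref{c1} vanishes at $t=2\pi j$.
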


\begin{proof}
By
taking $r=1$ in \eqref{tmom1},
or directly by taking the derivative of \eqref{er1} at $t=0$,  
we obtain
\begin{align}\label{m1}
  \ii\E\xX&
=
\gf_\xX'(0) =\sumjq  h_q(2\pi j)\gf'_X(2\pi j)+
\sumjq  h_q'(2\pi j)\gf_X(2\pi j)
.\end{align}
We have $\gf_X'(0)=\ii\E X$, 
and \eqref{c1} yields
\begin{align}
  \label{m10}
h_q(0)=1
\quad\text{and}\quad
h_q(2\pi j)=0 \text{ for }1\le j\le q-1.
\end{align}
Hence, \eqref{m1} simplifies to
\begin{align}\label{m11}
  \ii\E\xX&
=
\ii\E X+\sumjq  h_q'(2\pi j)\gf_X(2\pi j)
.\end{align}

The first expression in \eqref{c1} yields
\begin{align}\label{m2}
  h_q'(0)=\frac{1}{q}\sumkq \frac{-\ii k}{q}
=-\ii \frac{q(q-1)/2}{q^2}
=-\ii\frac{q-1}{2q}.
\end{align}
(Alternatively, by \eqref{gfU} this follows from, and is equivalent to, 
$\E U_q=\frac{q-1}{2q}$, which easily is seen directly.) 
Furthermore, 
the second expression in \eqref{c1} yields, noting that the numerator
there vanishes at $t=2\pi j$,
\begin{align}\label{m3}
  h_q'(2\pi j)
=\frac{\ii}{q(1-e^{-2\pi\ii j/q})},
\qquad 1\le j\le q-1.
\end{align}
Consequently, \eqref{m11} yields \eqref{t2}.

For $\E\round{X}$, we argue in the same way using \eqref{er2} or \eqref{tmom2}.
We note that \eqref{c2e}--\eqref{c2o} and \eqref{m10} yield
\begin{align}
  \label{w10}
\hh_q(0)=1
\quad\text{and}\quad
\hh_q(2\pi j)=0 \text{ for }1\le j\le q-1.
\end{align}
Hence, taking derivatives in \eqref{er2} yields
\begin{align}\label{m5}
  \ii\E\yX&
=
\ii\E X+\sumjq  \hh_q'(2\pi j)\gf_X(2\pi j)
.\end{align}

Furthermore, \eqref{c2e} and \eqref{c2o} 
together with \eqref{m10} and \eqref{m2}--\eqref{m3}
imply that:\\
If $q$ is even, then
\begin{align}\label{w2e}
  \hh_q'(0)=h_q'(0)+\frac{\ii}{2}h_q(0)=\frac{\ii}{2q}
\end{align}
and
\begin{align}\label{w3e}
  \hh_q'(2\pi j)
=e^{\ii\pi j} h_q'(2\pi j)
=\ii\frac{(-1)^j}{q(1-e^{-2\pi\ii j/q})},
\qquad 1\le j\le q-1;
\end{align}
if $q$ is odd, then
\begin{align}\label{w2o}
  \hh_q'(0)=h_q'(0)+\ii\frac{q-1}{2q}h_q(0)=0
\end{align}
(which also follows from \eqref{gfU} since $\E \tU_q=0$ when $q$ is odd),
and
\begin{align}\label{w3o}
  \hh_q'(2\pi j)
=e^{\ii\frac{q-1}{q}\pi j } h_q'(2\pi j)
=\ii\frac{(-1)^j}{q(e^{\pi\ii j/q}-e^{-\pi\ii j/q})}
=\frac{(-1)^j}{2q \sin(\pi j/q)}
,\qquad 1\le j\le q-1.
\end{align}
We obtain \eqref{t2e} and \eqref{t2o} by substituting
\eqref{w2e}--\eqref{w3o} into \eqref{m5}.
\end{proof}

\section{Second moment}\label{SMM}

\begin{theorem}\label{TMM}
Let $q\ge1$ be an integer, and
suppose that $X$ is a random variable such that 
$qX$ is integer-valued.
Suppose also $\E[X^2]<\infty$.
Then
\begin{align}\label{mm1}
\E\bigsqpar{\xX^2}&
=\E[X^2]
+\frac{2q^2-3q+1}{6q^2}
-\frac{q-1}{q}\E X
-2\sumjiq \frac{\ii}{q(1-e^{-2\pi\ii j/q})}\gf'_X(2\pi j)
\notag\\&\hskip4em
-\sumjiq \Bigpar{\frac{1}{q(1-e^{-2\pi\ii j/q})}
+2\frac{e^{-2\pi\ii j/q}}{q^2(1-e^{-2\pi\ii j/q})^2}}
\gf_X(2\pi j)
.\end{align}
If $q$ is even, then
\begin{align}\label{mm2e}
\E\bigsqpar{\yX^2}&
=\E[X^2] +\frac{1}{12}+\frac1{6q^2}
+\frac{1}{q}\E X
-2\sumjiq  \ii\frac{(-1)^j}{q (1-e^{2\pi\ii j/q})}\gf'_X(2\pi j)
\notag\\&\hskip4em
+\sumjiq  \frac{(-1)^{j}}{2q^2\sin^2(\pi j/q)}\gf_X(2\pi j)
.\end{align}
If $q$ is odd, then
\begin{align}\label{mm2o}
\E\bigsqpar{\yX^2}&
=\E[X^2] +\frac{1}{12}-\frac1{12q^2}
-\sumjiq  \frac{(-1)^j}{q \sin(\pi j/q)}\gf'_X(2\pi j)
\notag\\&\hskip4em
+\sumjiq  \frac{(-1)^{j}\cos(\pi j/q)}{2q^2\sin^2(\pi j/q)}\gf_X(2\pi j)
.\end{align}
\end{theorem}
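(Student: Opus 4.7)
The approach mirrors the proof of \refT{TM}: I apply \refT{Tmom} with $r=2$ and expand the second derivative of each product via Leibniz,
\begin{align*}
(h_q\gf_X)''(t) = h_q''(t)\gf_X(t) + 2 h_q'(t)\gf_X'(t) + h_q(t)\gf_X''(t),
\end{align*}
and similarly for $\hh_q\gf_X$. The values $h_q(0)=\hh_q(0)=1$, the vanishing $h_q(2\pi j)=\hh_q(2\pi j)=0$ for $1\le j\le q-1$, and the first derivatives at these points are already recorded in \eqref{m10}--\eqref{m3} and \eqref{w10}--\eqref{w3o}; together with $\gf_X(0)=1$, $\gf_X'(0)=\ii\E X$, $\gf_X''(0)=-\E[X^2]$, the only new ingredients needed are the second derivatives $h_q''$ and $\hh_q''$ at these same points.

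For $h_q''(0)$ the first form in \eqref{c1} immediately gives $h_q''(0) = -q^{-3}\sum_{k=0}^{q-1}k^2 = -(q-1)(2q-1)/(6q^2)$, which is exactly the constant appearing in \eqref{mm1}. For $1\le j\le q-1$, I write $h_q = N/D$ with $N(t)=1-e^{-\ii t}$ and $D(t)=q(1-e^{-\ii t/q})$ as in \eqref{c1}; since $N$ vanishes at $2\pi j$ but $D$ does not, the general expression for $(N/D)''$ collapses to $(N''D - 2N'D')/D^2$, and substituting the elementary values $N'(2\pi j)=\ii$, $N''(2\pi j)=1$, $D(2\pi j)=q(1-e^{-2\pi\ii j/q})$, $D'(2\pi j)=\ii e^{-2\pi\ii j/q}$ produces exactly the coefficient of $\gf_X(2\pi j)$ in \eqref{mm1}. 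Assembling these pieces into \eqref{tmom1}, with the overall factor $\ii^{-2}=-1$, yields \eqref{mm1}.

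For $\hh_q$ I use the factorization $\hh_q(t)=e^{\ii\beta t}h_q(t)$ with $\beta=\tfrac12$ ($q$ even) or $\beta=(q-1)/(2q)$ ($q$ odd), so that
\begin{align*}
\hh_q''(t) = -\beta^2 e^{\ii\beta t}h_q(t) + 2\ii\beta e^{\ii\beta t}h_q'(t) + e^{\ii\beta t}h_q''(t).
\end{align*}
At $t=0$ this produces the constants $\tfrac1{12}+\tfrac1{6q^2}$ (even $q$) and $\tfrac1{12}-\tfrac1{12q^2}$ (odd $q$), together with the $\tfrac1q\E X$ term that survives only in the even case (since $\hh_q'(0)=0$ when $q$ is odd). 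At $t=2\pi j$ only the last two terms of $\hh_q''$ contribute; converting the complex exponentials via $1-e^{-2\pi\ii j/q}=2\ii e^{-\ii\pi j/q}\sin(\pi j/q)$ reduces $\hh_q''(2\pi j)$ to the clean trigonometric forms $-(-1)^j/(2q^2\sin^2(\pi j/q))$ (even $q$) and $-(-1)^j\cos(\pi j/q)/(2q^2\sin^2(\pi j/q))$ (odd $q$), matching \eqref{mm2e} and \eqref{mm2o}. The main obstacle is purely bookkeeping---tracking signs, powers of $\ii$, and the trigonometric reductions---rather than anything conceptual; once the second derivatives of $h_q$ and $\hh_q$ are in hand, the stated formulas drop out directly.
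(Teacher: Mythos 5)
Your proposal is correct and follows essentially the same route as the paper's proof: take $r=2$ in \eqref{tmom1}--\eqref{tmom2} (equivalently, differentiate \eqref{er1}--\eqref{er2} twice at $t=0$), invoke the already-recorded values \eqref{m10}--\eqref{m3} and \eqref{w10}--\eqref{w3o}, and compute $h_q''$ and $\hh_q''$ at $0$ and at $2\pi j$ via the quotient form in \eqref{c1} and the factorization $\hh_q(t)=e^{\ii\beta t}h_q(t)$; all your intermediate values agree with the paper's \eqref{e2}--\eqref{e3} and \eqref{em2}--\eqref{em3o}. (One remark: your derivation, like the paper's own proof via \eqref{w3e}, produces $1-e^{-2\pi\ii j/q}$ in the denominator of the $\gf_X'$-sum of \eqref{mm2e}, whereas the displayed statement has $1-e^{2\pi\ii j/q}$; the sign in that exponent appears to be a typo in the statement, invisible only in the case $q=2$.)
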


\begin{proof}
By taking $r=2$ in \eqref{tmom1}, or directly
by taking the second derivative of \eqref{er1} at $t=0$, 
we obtain,
using $\gf_X''(0)=-\E[X^2]$ and \eqref{m10},
\begin{align}\label{e1}
\E(\xX^2)&
=
-\gf_\xX''(0) 
\notag\\&
=-\sumjq  h_q(2\pi j)\gf''_X(2\pi j)
-2\sumjq  h_q'(2\pi j)\gf'_X(2\pi j)
-\sumjq  h_q''(2\pi j)\gf_X(2\pi j)
\notag\\&
=\E[X^2]
-2\sumjq  h_q'(2\pi j)\gf'_X(2\pi j)
-\sumjq  h_q''(2\pi j)\gf_X(2\pi j)
.\end{align}

We obtain from \eqref{c1}
\begin{align}\label{e2}
  h_q''(0)
= \frac{1}{q} \sumkq \frac{-k^2}{q^2}
=-\frac{q(q-1)(2q-1)}{6q^3}
=-\frac{2q^2-3q+1}{6q^2}
\end{align}
and, since $e^{2\pi j\ii}=1$,
\begin{align}\label{e3}
h_q''(2\pi j)
=\frac{1}{q(1-e^{-2\pi\ii j/q})}
+2
\frac{e^{-2\pi\ii j/q}}{q^2(1-e^{-2\pi\ii j/q})^2},
\qquad j=1,\dots,q-1.
\end{align}
Consequently, using also \eqref{m2}--\eqref{m3}, $\gf_X'(0)=\ii\E X$ and
$\gf_X(0)=1$, 
\eqref{mm1} follows from 
\eqref{e1}.

Turning to $\round X$, we obtain as above, from \eqref{er2} or
\eqref{tmom2},
\begin{align}\label{em1}
&\E(\yX^2)
=\E[X^2]
-2\sumjq  \hh_q'(2\pi j)\gf'_X(2\pi j)
-\sumjq  \hh_q''(2\pi j)\gf_X(2\pi j)
.\end{align}

For even $q$,
we obtain from \eqref{c2e}, \eqref{e2}--\eqref{e3}, 
\eqref{m2}--\eqref{m3}, and \eqref{m10},
\begin{align}\label{em2}
  \hh_q''(0)=h_q''(0)+\ii h_q'(0)-\tfrac14 h_q(0)
=-\frac{q^2+2}{12q^2}
\end{align}
and 
\begin{align}\label{em3}
\hh_q''(2\pi j)&
=(-1)^jh_q''(2\pi j)+2\frac{\ii}{2}(-1)^jh_q'(2\pi j)
\notag\\&
=2\frac{(-1)^je^{-2\pi\ii j/q}}{q^2(1-e^{-2\pi\ii j/q})^2}
=\frac{(-1)^{j+1}}{2q^2\sin^2(\pi j/q)}
,\qquad j=1,\dots,q-1.
\end{align}

For odd $j$, we obtain similarly 
(or directly from the last formula in \eqref{c2o})
\begin{align}\label{em2o}
  \hh_q''(0)
=-\frac{q^2-1}{12q^2}
\end{align}
and 
\begin{align}\label{em3o}
\hh_q''(2\pi j)&
=\frac{(-1)^{j+1}\cos(\pi j/q)}{2q^2\sin^2(\pi j/q)}
,\qquad j=1,\dots,q-1.
\end{align}
The results \eqref{mm2e} and \eqref{mm2o} now follow from \eqref{em1},
using \eqref{w2e}--\eqref{w3o} and \eqref{em2}--\eqref{em3o}.
\end{proof}

\section{Examples}\label{Sex}

\begin{example}\label{E2}
  Consider the simplest case $q=2$, i.e., assume that $X$ a.s.\ is an integer
  or half-integer.
Then $\round X = \ceil X$.
For the mean, we obtain from \refT{TM}
\begin{align}\label{jw1a}
  \E\floor{X} &= \E X -\frac{1}{4} + \frac{\gf_X(2\pi)}{4},
\\\label{jw1b}
  \E\ceil X =\E\round{X} &= \E X +\frac{1}{4} - \frac{\gf_X(2\pi)}{4}.
\end{align}
(This can also easily be seen directly.)
Similarly, 
for the second moment,  from  \refT{TMM},
\begin{align}\label{jw2a}
 \E\bigsqpar{\xX^2}&=\E\sqpar{X^2}+\frac{1}{8}-\frac12\E X 
-\frac{\ii}2\gf_X'(2\pi)-\frac{1}{8}\gf_X(2\pi),
\\\label{jw2b}
\E\bigsqpar{{\ceil X}^2}=
\E\bigsqpar{\yX^2}&
=\E\sqpar{X^2}+\frac{1}{8}+\frac12\E X +\frac{\ii}2\gf_X'(2\pi)
-\frac{1}{8}\gf_X(2\pi).
\end{align}
Note that \eqref{jw1a}--\eqref{jw1b} and \eqref{jw2a}--\eqref{jw2b}
agree with the relation
$\ceil{X}=-\floor{-X}$.
\end{example}

\begin{example}\label{E0}
  Let $q\ge1$, and let $X:=U_q$. This eaxmple is trivial, since obviously
  $\floor{U_q}=0$; nevertheless it is interesting to see how this is reflected
  in the formulas above. Recall that by \eqref{gfU}, 
we now have
  \begin{align}\label{jw3}
    \gf_X(t)=\gf_{U_q}(t)=h_q(-t)=\overline{h_q(t)}.
  \end{align}
In particular, \eqref{m10} shows that
\begin{align}\label{jw4}
  \gf_{U_q}(2\pi j)=0\quad\text{for } 1\le j\le q-1.
\end{align}
Hence the sum in \eqref{t2} vanishes, and \eqref{t2} reduces to
\begin{align}\label{jw5}
  0=\E\floor{U_q}=\E U_q -\frac12+\frac1{2q}
=\E U_q -\frac{q-1}{2q},
\end{align}
or 
\begin{align}\label{jw5b}
\E U_q = \frac{q-1}{2q},   
\end{align}
as is easily seen 
(and was observed after \eqref{m2}).

The variance formula is more interesting.
The last sum in \eqref{mm1} vanishes, again because of \eqref{jw4}.
For the first sum we have, by \eqref{m3} and \eqref{jw3},
\begin{align}\label{jw6}
  \sumjiq \frac{\ii}{q(1-e^{-2\pi\ii j/q})}\gf'_{U_q}(2\pi j)&
=\sumjiq h_q'(2\pi j)\overline{h_q'(2\pi j)}
=\sumjiq \bigabs{h_q'(2\pi j)}^2
\notag\\&
=\sumjiq \frac{1}{\abs{2q\sin(\pi j/q)}^2}
=\frac{1}{4q^2}\sumjiq \frac{1}{\sin^2(\pi j/q)}
.\end{align}
Hence \eqref{mm1} yields, using also \eqref{jw5b}, 
\begin{align}\label{jw7}
  0=\E\bigsqpar{{\floor{U_q}}^2}
=\E[U_q^2] +\frac{2q^2-3q+1}{6q^2}-\frac{(q-1)^2}{2q^2}
-\frac{1}{2q^2}\sumjiq \frac{1}{\sin^2(\pi j/q)}.
\end{align}
Furthermore, by \eqref{jw3} and \eqref{e2},
\begin{align}
  \E[U_q^2]=-\gf_{U_q}''(0)=-h_q''(0)=\frac{2q^2-3q+1}{6q^2},
\end{align}
which equals the second term on the \rhs{} of \eqref{jw7}.
(See the proof of \refT{TMM}.)
Hence \eqref{jw7} is equivalent to
\begin{align}
  \label{jw8}
\sumjiq \frac{1}{\sin^2(\pi j/q)}
=\frac{2}3\bigpar{2q^2-3q+1}-(q-1)^2
=\frac{q^2-1}3.
\end{align}
This non-obvious formula can also be shown more directly by 
applying Parseval's formula to  the restriction of $h_q'$ to 
\set{2\pi  j:0\le j<q}, regarded as a function
on the cyclic group $\bbZ_q$; this function is given
by \eqref{m2}--\eqref{m3}, and it follows from \eqref{c1} that its  inverse
Fourier transform (also a function on $\bbZ_q$) is given by
$-\ii\frac{k}{q}$, $0\le k<q$; calculations similar to (and related to) the
ones above then yield \eqref{jw8}.

We obtain the same result \eqref{jw8} by similar calculations if we instead
take  $X:=\tU_q$ and consider $\round{\tU_q}=0$ in \refT{TMM}.
\end{example}

\begin{example}
  As a more complicated example, we consider the following problem;
we thank Seungki Kim for asking it, which was the original motivation for
the present paper. 

Let $q$ be odd, let $n\ge1$, and let $\xi_1,\dots,\xi_n$ be i.i.d.\  
with each $\xi_k$ having the distribution of $\tU_q$ in \refS{Schf},
i.e., $\xi_k$ is uniformly distributed on 
$\set{-\frac{q-1}{2q},-\frac{q-3}{2q},\dots,\frac{q-3}{2q},\frac{q-1}{2q}}$;
in other words  each $q\xi_k$ is uniformly distributed on the set
$\set{-\frac{q-1}2,-\frac{q-3}2,\dots,\frac{q-3}2,\frac{q-1}2}$
of integers in $(-\frac{q}2,\frac{q}2)$.
Let $s_1,\dots, s_n$ be positive integers, and 
consider
\begin{align}\label{s1}
X := s_1 \xi_1+\dots+ s_n \xi_n.
\end{align}
Since $q$ is odd we have $\E \xi_i=\E\tU_q=0$ and,
using \eqref{gfU} and \eqref{em2o},
\begin{align}\label{ew1}
  \Var\xi_i=\E[\xi_i^2]=\E\bigsqpar{\tU_q^2}=-\hh_q''(0)=\frac{q^2-1}{12q^2}.
\end{align}
Consequently,
\begin{align}\label{ew2}
  \Var X = \sumin s_i^2\Var\xi_i = \frac{q^2-1}{12q^2}\sumin s_i^2.
\end{align}
Consider now the rounded variable $\round{X}$, and in particular its
variance.
A natural approximation to $\Var[\round X]$ is $\Var X+\frac{1}{12}$.
(For any $X$. In statistics this is known as Sheppard's correction 
(for the variance) when dealing with grouped data,
see \eg{} \cite[Section 27.9]{Cramer}.
See also \cite{SJ175} for continuous $X$.)
How good is this approximation for the variable $X$ in \eqref{s1}?

We will in \eqref{ss7} below
give an upper bound of the error that is valid for all $n\ge2$ and
$s_1,\dots,s_n$; 
it is presumably not sharp but it seems to be rather
good when the $s_i$ are much smaller
than $q$. We leave it to the readers to find better bounds,
in particular for cases with larger $s_i$.

Each $\xi_k$ has the \chf{}
\begin{align}\label{s2}
  \gf_\xi(t) 
=\frac{1}q \sum_{j=-(q-1)/2}^{(q-1)/2}e^{\ii jt/q}
=\hh_q(t)
=\frac{\sin \frac{t}2}{q\sin\frac{t}{2q}},
\end{align}
see \eqref{c2o} and \eqref{gfU}.
Consequently, $X$ has the \chf{}
\begin{align}\label{s3}
  \gf_X(t)=\prodkn \gf_\xi(s_kt) 
=\prodkn \hh_q(s_kt)
.\end{align}

It follows from \eqref{w10} and the fact that $\hh_q$ has period $2\pi q$
that for every $j\in\bbZ$, we have
$\gf_X(2\pi j)\in\setoi$, and, letting $\GCD(\dots)$ denote the greatest
common divisor,
\begin{align}\label{s4}
  \gf_X(2\pi j)\neq0 &
\iff  \gf_X(2\pi j)=1
\iff \gf_\xi(2\pi js_k)=1\;\forall k \le n
\iff q\del js_k\;\forall k \le n
\notag\\&
\iff q\del j\GCD(s_1,\dots,s_n,q).
\end{align}
Let 
\begin{align}
d&:=  \GCD(s_1,\dots,s_n,q),\label{s5}
\\
J&:=q/d.\label{s6}
\end{align}
Note that $d$ and $J$ are divisors of $q$, and thus odd positive integers with
$d,J\le q$. Furthermore, \eqref{s4} can be written
\begin{align}\label{s7}
  \gf_X(2\pi j) = \indic{q\del jd} = \indic{J\del j}.
\end{align}

Each $\xi_k$ has a distribution that is symmetric:
$-\xi_k\eqd\xi_k$. Thus the same holds for $X$, and thus also for $\round X$
(since $q$ is odd, cf.\ \refR{Rsym}).
In particular, 
\begin{align}\label{sa1}
\E\round X=0.  
\end{align}

For the second moment, we use \eqref{mm2o}.
Consider first the last sum in \eqref{mm2o}.
By \eqref{s7}, we may sum over $j=\ell J$, $\ell=1,\dots,d-1$, only,
and thus the absolute value of the sum is 
\begin{align}\label{sq1}
\lrabs{\sumjiq  \frac{(-1)^{j}\cos(\pi j/q)}{2q^2\sin^2(\pi j/q)}\gf_X(2\pi j)}
\le\sumlid \frac{1}{2q^2\sin^2(\pi \ell J/q)}
= \frac{1}{2q^2}\sumlid \frac{1}{\sin^2(\pi \ell /d)}.
\end{align}
The final sum is $(d^2-1)/3$ by \eqref{jw8}.
Hence, \eqref{sq1} implies
\begin{align}\label{sq1a}
\lrabs{\sumjiq  \frac{(-1)^{j}\cos(\pi j/q)}{2q^2\sin^2(\pi j/q)}\gf_X(2\pi j)}
< \frac{d^2}{6q^2}
\le\frac{(\min_k s_k)^2}{6q^2}.
\end{align}

For the first sum in \eqref{mm2o}, we first 
define
\begin{align}\label{sq2}
  X_k:=\sum_{i\neq k} s_i\xi_i,
\end{align}
the sum \eqref{s1} with the term $s_k\xi_k$ omitted.
Then we 
differentiate \eqref{s3}
and obtain
\begin{align}\label{sq3}
  \gf_X'(t)=\sumkn s_k \gf'_\xi(s_kt)\prod_{i\neq k} \gf_\xi(s_it) 
=
\sumkn s_k \gf'_\xi(s_kt)\gf_{X_k}(t)
.\end{align}
Let
\begin{align}
d_k&:=  \GCD(\set{s_i:i\neq k}\cup\set q),\label{sq5}
\\
J_k&:=q/d_k.\label{sq6}
\end{align}
Then \eqref{s7} applied to $X_k$ yields, for all $j\in\bbZ$,
\begin{align}\label{sq7}
  \gf_{X_k}(2\pi j) = \indic{q\del jd_k} = \indic{J_k\del j}.
\end{align}
Consequently, 
for the first sum in \eqref{mm2o},
using \eqref{sq3}, \eqref{s2}, and \eqref{w2o}--\eqref{w3o}, 
and taking $j=\ell J_k$
similarly to the argument in \eqref{sq1},
\begin{align}\label{sq8}
  \sumjiq \frac{(-1)^j}{q \sin(\pi j/q)}\gf'_X(2\pi j)
&= \sumjiq  \frac{(-1)^j}{q \sin(\pi j/q)}
  \sumkn s_k\gf'_\xi(2\pi s_kj)\gf_{X_k}(2\pi j)
\notag\\&
=\sumkn s_k \sumjiq  \frac{(-1)^j}{q \sin(\pi j/q)}\cdot 
\frac{(-1)^{s_kj}\indic{q\nmid s_kj}}{2q \sin(\pi s_kj/q)}\indic{J_k\del j}
\notag\\&
=\sumkn \frac{s_k}{2q^2} \sum_{\ell=1}^{d_k-1}  
\frac{(-1)^{(s_k+1)\ell}\indic{d_k\nmid s_k\ell}}
 {\sin(\pi \ell/d_k)\sin(\pi s_k\ell/d_k)}.
\end{align}
Denote the inner sum on the last line by $S_k$.
Then the \CSineq{} yields
\begin{align}\label{sq9}
  S_k^2 \le 
\sum_{\ell=1}^{d_k-1} \frac{1}{\sin^2(\pi \ell/d_k)}
\cdot
\sum_{\ell=1}^{d_k-1}\frac{\indic{d_k\nmid s_k\ell}}{\sin^2(\pi s_k\ell/d_k)}.  
\end{align}
The first sum is $<d_k^2/3$ by \eqref{jw8}. For the second sum,
we note that by \eqref{s5} and \eqref{sq5}, 
\begin{align}\label{ss1}
d=\gcd(d_k,s_k)  
\end{align}
and thus
\begin{align}\label{ss2}
  d_k\del s_k\ell \iff d_k \del \gcd(s_k\ell,d_k\ell)=d\ell
\iff (d_k/d) \del \ell.
\end{align}
Let $r_k:=d_k/d$ (an odd integer by \eqref{ss1} and \eqref{sq5})
and $s_k':=s_k/d$ (also an integer by \eqref{ss1}).
It follows that $\gcd(s_k',r_k)=1$; thus $s_k'\ell$ runs through the
equivalence classes modulo ${r_k}$ once each as $\ell=1,\dots,r_k$,
and $d_k/r_k=d$ times each as $\ell=1,\dots,d_k$.
Consequently, since $s_k\ell/d_k=s'_k\ell/r_k$, using \eqref{jw8} again,
\begin{align}\label{ss3}
  \sum_{\ell=1}^{d_k-1}\frac{\indic{d_k\nmid s_k\ell}}{\sin^2(\pi s_k\ell/d_k)}
= 
d \sum_{j=1}^{r_k-1}\frac{1}{\sin^2(\pi j/r_k)}
<\frac{d r_k^2}{3} = \frac{d_k^2}{3d}
\le \frac{d_k^2}{3}.
\end{align}
Hence, \eqref{sq9} yields $S_k^2\le (d_k^2/3)^2$ and thus $|S_k|\le d_k^2/3$,
and  \eqref{sq8} yields
\begin{align}\label{ss4}
\lrabs{\sumjiq \frac{(-1)^j}{q \sin(\pi j/q)}\gf'_X(2\pi j)}  
\le\frac{1}{6q^2} \sumkn s_k d_k^2
.\end{align}

Assume now $n\ge2$. Then $d_k\le s_{k+1}$ (with the index taken modulo $n$),
and thus \Holder's inequality (with exponents $3$ and $3/2$) shows
\begin{align}\label{ss5}
  \sumkn s_kd_k^2 
\le \Bigpar{\sumkn s_k^3}^{1/3}\Bigpar{\sumkn d_k^3}^{2/3}
\le \sumkn s_k^3.
\end{align}
Consequently, 
\eqref{mm2o} yields, using \eqref{ss4}--\eqref{ss5} and \eqref{sq1a},
\begin{align}\label{ss7}
\lrabs{ \E\bigsqpar{\yX^2}-\bigpar{\E[X^2] +\tfrac{1}{12}}}
\le \frac{1+\sumkn s_k^3+(\min_k s_k)^2}{6q^2}
\le \frac{\sumkn s_k^3}{3q^2}.
\end{align}
\end{example}

\newcommand\AAP{\emph{Adv. Appl. Probab.} }
\newcommand\JAP{\emph{J. Appl. Probab.} }
\newcommand\JAMS{\emph{J. \AMS} }
\newcommand\MAMS{\emph{Memoirs \AMS} }
\newcommand\PAMS{\emph{Proc. \AMS} }
\newcommand\TAMS{\emph{Trans. \AMS} }
\newcommand\AnnMS{\emph{Ann. Math. Statist.} }
\newcommand\AnnPr{\emph{Ann. Probab.} }
\newcommand\CPC{\emph{Combin. Probab. Comput.} }
\newcommand\JMAA{\emph{J. Math. Anal. Appl.} }
\newcommand\RSA{\emph{Random Structures Algorithms} }
\newcommand\DMTCS{\jour{Discr. Math. Theor. Comput. Sci.} }

\newcommand\AMS{Amer. Math. Soc.}
\newcommand\Springer{Springer-Verlag}
\newcommand\Wiley{Wiley}

\newcommand\vol{\textbf}
\newcommand\jour{\emph}
\newcommand\book{\emph}
\newcommand\inbook{\emph}
\def\no#1#2,{\unskip#2, no. #1,} 
\newcommand\toappear{\unskip, to appear}

\newcommand\arxiv[1]{\texttt{arXiv}:#1}
\newcommand\arXiv{\arxiv}

\newcommand\xand{and }
\renewcommand\xand{\& }

\def\nobibitem#1\par{}

\end{document}